\theoremstyle{plain}
\newtheorem{theorem}{Theorem}
\newtheorem{proposition}[theorem]{Proposition}
\newtheorem{corollary}[theorem]{Corollary}
\theoremstyle{definition}
\theoremstyle{remark}
\newtheorem{remark}{Remark}
\newcommand{\Z}{\ensuremath\mathbb{Z}}
\newcommand{\R}{\ensuremath\mathbb{R}}
\newcommand{\cC}{\ensuremath\mathcal{C}}
\newcommand{\cD}{\ensuremath\mathcal{D}}
\newcommand{\cF}{\ensuremath\mathcal{F}}
\newcommand{\cS}{\ensuremath\mathcal{S}}
\newcommand{\cV}{\ensuremath\mathcal{V}}
\newcommand{\cZ}{\ensuremath\mathcal{Z}}
\newcommand{\bA}{\ensuremath\mathbf{A}}
\newcommand{\bc}{\ensuremath\mathbf{c}}
\newcommand{\be}{\ensuremath\mathbf{e}}
\newcommand{\bM}{\ensuremath\mathbf{M}}
\newcommand{\bS}{\ensuremath\mathbf{S}}
\newcommand{\bx}{\ensuremath\mathbf{x}}
\newcommand{\bX}{\ensuremath\mathbf{X}}
\newcommand{\by}{\ensuremath\mathbf{y}}
\newcommand{\bY}{\ensuremath\mathbf{Y}}
\newcommand{\bU}{\ensuremath\mathbf{U}}
\newcommand{\bv}{\ensuremath\mathbf{v}}
\newcommand{\bV}{\ensuremath\mathbf{V}}
\newcommand{\bz}{\ensuremath\mathbf{z}}
\newcommand{\bZ}{\ensuremath\mathbf{Z}}
\newcommand{\bxi}{\ensuremath\boldsymbol{\xi}}
\newcommand{\bfeta}{\ensuremath\boldsymbol{\eta}}
\newcommand{\bnull}{{\ensuremath\boldsymbol{0}}}
\newcommand{\bfone}{\ensuremath\boldsymbol{1}}
\newcommand\1{\ensuremath\mathbbm{1}}
\newcommand\co[1]{\,\smash{\mathop{\longrightarrow}\limits^{#1}}\,}
\newcommand\eq[1]{\,\smash{\mathop{=}\limits^{#1}}\,}
\newcommand\myvector[2]{\left[\begin{array}{c} #1 \\ #2 \end{array}\right]}
\title{Ergodic properties of subcritical multitype Galton--Watson processes}
\author{G\'abor Sz\H ucs\\University of Szeged}
\date{}
\begin{document}

\maketitle

\begin{abstract}
In the paper the ergodic properties of multitype Galton--Watson processes are investigated in the subcritical case. A sufficient and necessary moment condition for the existence of an invariant distribution is proved without further regularity assumptions. Under some moment conditions geometric ergodicity is shown and rate of converge for the means of functions of the process is provided. The geometric properties of the Markovian class structure are also investigated.
\end{abstract}

\section{Introduction}
\label{sec:intro}

Galton--Watson processes are historically one of the oldest fields in the theory of stochastic processes. Although the probability of extinction was determined in the 19. century, the investigation of the asymptotic properties of branching processes was less intense during the first half of the 20. century. Finally, \citet{harris48} rediscovered the subject by showing the convergence of the scaled process in almost sure sense under some regularity conditions. The existence of a stationary distribution was proved in the 1970's. First, \citet{foster71} proved that the single type process possesses a unique invariant distribution in the subcritical case, \citet{kaplan73} achieved similar result for multitype processes with positive regular mean matrices.

In this paper the investigate the ergodic properties of multitype Galton--Watson processes. The research is limited to the subcritical case, but we do not assume any further regularity conditions. In our first theorem we investigate the Markovian class structure of the process. It is shown that there is a unique positive recurrent class which is reached with probability 1 in case fo any initial distribution. Using this result we can prove that Kaplan's logarithmic moment condition is sufficient and necessary for the existence of a stationary distribution in case of an arbitrary subcritical Galton--Watson process. In our last theorem we prove geometric ergodicity and provide a rate of convergence for the moments of functions of the process.

\section{Main results}
\label{sec:main results}

Let $\Z_+$ stand for the set of nonnegative integers, and consider an arbitrary positive integer $p$. The $p$-type Galton--Watson process $\bX_n=(X_{n,1},\dots,X_{n,p})^\top$, $n\in\Z_+$, is a $\Z_+^p$-valued Markov chain defined by the recursion
\begin{equation}\label{eq:GW def}
\bX_n=\sum_{k=1}^{X_{n-1,1}} \bxi_1(n,k) + \cdots + \sum_{k=1}^{X_{n-1,p}} \bxi_p(n,k) + \bfeta(n)\,,
\qquad
n=1,2,\dots,
\end{equation}
where the $\Z_+^p$-valued random vectors
\begin{equation}\label{eq:ind variables}
\bX_0, \bxi_i(n,k), \bfeta(n),
\qquad
i=1,\dots,p\,,
\quad
n,k=1,2,\dots
\end{equation}
are independent of each other, the offspring variables $\bxi_i(n,k)$, $n,k=1,2,\dots$, are identically distributed for every $i=1,\dots,r$, and the innovation variables $\bfeta(n)$, $n=1,2,\dots$, are identically distributed. In the following we interpret the random vector $\bX_n$ as the size of the $n$-th generation of an underlying population having $p$ different types of members. The offsprings of any subpopulation of the process are called \textit{1st generation offsprings}, and by recursion, the \textit{$n$-th generation offsprings} are defined as the offsprings of the $(n-1)$-th generation offsprings. We say that a member is \textit{multigeneration offspring} if it is $n$-th generation offspring with some positive integer $n$.

Throughout the paper we assume that the offspring variables have finite expectations, and we consider the mean matrix
\[
\bM:=\big[ E\bxi_1(1,1),\dots, E\bxi_p(1,1)\big]^\top\,.
\]
It is well-know that the asymptotic properties of the Galton--Watson process depends largely on the spectral radius $\varrho(\bM)$ of the matrix $\bM$. (See \citet{mode71} or \citet{athreya72}, for example.) The process is called subcritical, critical or supercritical if the spectral radius is smaller than 1, equal to 1 or larger than 1, respectively. In our paper we investigate only the subcritical (also known as stable) case. Note that in this case the multigeneration offsprings of any member of the population die out in finitely many steps with probability 1.

Consider an arbitrary $\bx=(x_1,\dots,x_p)^\top\in\Z_+^p$. Throughout the paper the notations $P_\bx$ and $E_\bx$ mean probability and expectation with respect to the condition $\{\bX_0=\bx\}$. We introduce the variable
\begin{equation}\label{eq:S def}
\bS(\bx)=\sum_{k=1}^{x_1} \bxi_1(1,k) + \cdots + \sum_{k=1}^{x_p} \bxi_p(1,k) \,,
\end{equation}
which is the number of the 1st generation offsprings of the initial population $\bX_0$ under the condition $\{\bX_0=\bx\}$. Note that we $E \bS(\bx)=\bM^\top\bx$, which implies by recursion that the expected number of the $n$-th generation offsprings of a population of size $\bx$ is $(\bM^n)^\top\bx$.

For any $\bx=(x_1,\dots,x_p)^\top\in\R^p$ and $\by=(y_1,\dots,y_p)^\top\in\R^p$ the notation $\bx\leq\by$ is understood componentwise, that is, $\bx\leq\by$ if and only if $x_i\leq y_i$ for $i=1,\dots,p$. The norm of the vector $\bx$ is defined as $\|\bx\|=|x_1|+\cdots+|x_p|$. By using the Kronecker delta symbol $\delta_{i,j}$ the system $\be_i=(\delta_{i,1},\dots,\delta_{i,p})^\top$, $i=1,\dots,p$, stands for the canonical basis of the vector space $\R^p$. In case of an arbitrary event $A$ the random variable $\1_A$ stands for the indicator of $A$, and for a set $B\subseteq\R^p$ the function $\1_B(\bx):=\1_{\{\bx\in B\}}$, $\bx\in\R^p$, is the indicator of $B$.

Our first result is a statement about the class structure of the chain $\bX_n$, $n\in\Z_+$.

\begin{theorem}\label{thm:irreducibility}
If a $p$-type Galton--Watson process is subcritical, then it has an aperiodic communication class $\cC\subseteq\Z_+^p$ such that the process reaches $\cC$ in finitely many steps with probability 1 for any initial distribution.
\end{theorem}

In our next theorem we provide a necessary and sufficient condition for the existence of a stationary distribution of subcritical Galton--Watson processes. Note that such a statement is already proved under the condition that the mean matrix $\bM$ is positive regular, meaning that there exists a positive integer $n$ such that all entries of $\bM^n$ are strictly positive. In this case by the well-known result of \citet{kaplan73} a stationary distribution exists if and only if the sum $\sum_{k=1}^\infty \log k P(\|\bfeta(1)\|=k)$ is finite. However, this equivalence is not true in the case of arbitrary offspring distributions. For example, if $\bM=\bnull$ then $\bX_n=\bfeta(n)$ for every positive $n$, and the distribution of the innovation variables is a stationary distribution for the process without any additional condition.

Let $M^{(n)}_{i,j}$ stands for the $(i,j)$-th entry of the matrix $\bM^n$, which is the expected number of those $n$-th generation offsprings of an arbitrary member of type $i$ which are of type $j$. We define $I$ as the set of those types $i=1,\dots,p$ for which there exists a type $j$ and integers $m_0\geq 0$ and  $m\geq 1$ such that $M^{(m_0)}_{i,j}>0$ and $M^{(m)}_{j,j}>0$. Since the process is subcritical, the multigeneration offsprings of an arbitrary member of the process die out in finitely many steps with probability 1. However, if $i\in I$ then we have $M^{(m_0+nm)}_{i,j}>0$ for every positive integer $n$. This implies that a member of type $i$ can have $n$-th generation offsprings with positive probability for any $n$. On the other hand, if $i\not\in I$ then it can be shown by standard calculation that we have $M^{(p)}_{i,j}=0$ for every state $j$. That is, in this case the multigeneration offsprings of a member of type $i$ die out at most in $p$ steps with probability 1.

Let $\eta_i(n)$ denote the $i$-th component of the vector $\bfeta(n)$, and consider the subpopulation of those members in the $n$-th generation of the process which are multigeneration offsprings of the innovations $\eta_i(k)$, $k=1,2,\dots$ It turns out that the existence of a stationary distribution requires that the size of these subpopulations converges in distribution as $n\to\infty$ for every $i$. If $i\in I$ then the corresponding state $j$ provides a feedback for the consecutive generations of the offsprings of $i$. Because of this feedback we need similar conditions on the innovation variable $\eta_i(1)$ as in the simple-type case. On the other hand, if $i\not\in I$ then such a feedback is not present, and the multigeneration offsprings of $i$ die out at most in $p$ steps. In this case the size of the subpopulations mentioned above is a stationary process without any additional condition on type $i$. This implies that the distributions of the innovation variables $\eta_i(1)$, $i\not\in I$, have no effect on the existence of a stationary distribution of the Galton--Watson process.

\begin{theorem}\label{thm:stat distr}
The subcritical Galton--Watson process $\bX_n$, $n\in\Z_+$, has a stationary distribution $\pi$ if and only if we have $\sum_{k=1}^\infty \log k P(\eta_i(1)=k)<\infty$ for every types $i\in I$.
\end{theorem}

Note that for an arbitrary nonnegative integer valued random variable $\zeta$ the sum $\sum_{k=1}^\infty \log k P(\zeta=k)$ is finite if and only if the expectation $E\log(\zeta+1)$ is finite.

Since $\cC$ is the only closed communication class by Theorem \ref{thm:irreducibility}, a subcritical Galton--Watson process has at most one positive recurrent class. This implies that the stationary distribution is unique and concentrated on $\cC$ if it exists. From Theorem \ref{thm:irreducibility} it also follows that every subcritical Galton--Watson process is $\psi$-\textit{irreducible} and \textit{aperiodic} in the sense of \citet{meyn09}. The \textit{maximal irreducibility measures} $\psi$ are those probability measures on the state space $\Z_+^p$ which are concentrated on $\cC$ and put positive mass at every state in this class. Furthermore, if the (unique) stationary distribution exists then the process is \textit{positive Harris recurrent}, and Theorem 13.0.1 of \citet{meyn09} implies that for any $\bx\in\Z_+^p$ we have
\[
\sup_{B\subseteq\Z_+^p} \big| P_\bx(\bX_n\in B) - \pi(B) \big| \to 0\,,
\qquad
n\to\infty\,.
\]
Under some stronger moment conditions we provide a rate for this convergence in Corollary \ref{cor:geom erg}.

Let $\widetilde\bX$ stand for a random vector with distribution $\pi$. In several applications showing the linear independence of the components of $\widetilde\bX$ is required. For example, assume that we want to estimate the mean matrix $\bM$ based on some observations $\bX_0,\dots,\bX_n$ by using the conditional least squares method or its weighted variant. Unfortunately, these estimators may not exist for every realizations of the sample. However, \citet{nedenyi15} showed that they are well-defined with asymptotic probability $1$ as $n\to\infty$ if the components of $\widetilde\bX$ are linearly independent. Also, a similar problem arose in \citet{pap13} about the maximum likelihood estimation of the parameters of the $\textrm{INAR}(p)$ process. Since the stationary distribution $\pi$ puts positive mass at every state in $\cC$, the components of $\widetilde\bX$ are linearly dependent if and only if the class $\cC$ is a subset of a lower dimensional affine subspace of $\R^p$. In our next theorem we provide necessary and sufficient conditions for this behavior.

We say that an arbitrary type $i$ \textit{dies out} if we have $P(\eta_j(1)=0)=1$ for every types $j$ for which there exists a nonnegative integer $n$ such that $M^{(n)}_{j,i}>0$. This property means that there is no innovation in type $i$, and a member of type $i$ can not be a multigeneration offspring of the innovations $\bfeta(1),\bfeta(2),\dots$ This implies that all members in $X_{n,i}$ are $n$-th generation offsprings of the initial population $\bX_0$. Since the process is subcritical, if a type dies out in the sense of our definition, then this type vanishes from the population in finitely many steps with probability one.

\begin{theorem}\label{thm:lin dep}
Assume that the Galton--Watson process $\bX_n$, $n\in\Z_+$, is subcritical. The communication class $\cC$ defined in Theorem \ref{thm:irreducibility} is a subset of a lower dimensional affine subspace of $\R^p$ if and only if either of the following conditions holds:
\begin{enumerate}

\item\label{thm:lin dep 1}
Any of the types dies out.

\item\label{thm:lin dep 2}
There exists a vector $\bc\in\R^p$, $\bc\neq\bnull$, such that $\bc^\top\bxi_i(1,1)=0$ almost surely for every types $i$, and the variable $\bc^\top\bfeta(1)$ is degenerate.

\end{enumerate}
If type $i$ dies out, then $\cC$ is a subset of the linear space defined by the equation $\be_i^\top\bx=0$, $\bx\in\R^p$. If \ref{thm:lin dep 2} holds, then $\cC$ is a subset of the affine subspace $\bc^\top\bx=\bc^\top\bfeta(1)$, $\bx\in\R^p$.
\end{theorem}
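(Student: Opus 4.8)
The plan is to first turn the geometric statement into a linear-algebraic one: $\cC$ lies in a lower-dimensional affine subspace exactly when there is a nonzero $\bc$ with $\bc^\top\bx$ constant over $\bx\in\cC$, equivalently when the linear space $W=\{\bv\in\R^p:\bv^\top\bx\text{ is constant on }\cC\}$ is nontrivial. Since $\cC$ carries the stationary distribution of Theorem \ref{thm:irreducibility} it is closed, so $\bX_1\in\cC$ almost surely under $P_\bx$ for every $\bx\in\cC$; this is the fact I will exploit repeatedly.

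Sufficiency is the short direction. If \ref{thm:lin dep 2} holds, applying $\bc^\top$ to the recursion \eqref{eq:GW def} and using $\bc^\top\bxi_i(1,k)=0$ a.s. gives $\bc^\top\bX_n=\bc^\top\bfeta(n)$, which is degenerate; hence every state occupied at times $n\ge1$, in particular every state of $\cC$, satisfies $\bc^\top\bx=\bc^\top\bfeta(1)$. If \ref{thm:lin dep 1} holds, I would argue genealogically: any type-$j$ individual present in the stationary regime would have to descend from an innovation of some type $i$ with $E\eta_i(1)>0$, contradicting $M^{(n)}_{i,j}=0$; thus $\pi$ is carried by $\{\be_j^\top\bx=0\}$ and so is $\cC$. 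These two computations also produce the explicit affine subspaces asserted in the final sentence.

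For necessity, assume $W\neq\bnull$. The workhorse is the elementary fact that if a finite sum of independent random variables is a.s.\ constant, then each summand is degenerate. Fixing $\bv\in W$ and $\bx\in\cC$, closedness gives $\bv^\top\bX_1=\text{const}$ under $P_\bx$; expanding $\bv^\top\bX_1$ via \eqref{eq:GW def} as a sum of independent terms shows $\bv^\top\bxi_i(1,1)$ is degenerate for every type $i$ occurring in some state of $\cC$, and $\bv^\top\bfeta(1)$ is degenerate. The degenerate values equal $(\bM\bv)_i=E[\bv^\top\bxi_i(1,1)]$ and $\bv^\top E\bfeta(1)$, and letting $\bx$ range over $\cC$ shows $(\bM\bv)^\top\bx$ is again constant on $\cC$, i.e.\ $\bM\bv\in W$, so $W$ is $\bM$-invariant. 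I then split into cases. If some type $j$ occurs in no state of $\cC$, then the set of occurring types contains all innovation types and is closed under the offspring relation while excluding $j$, which forces $M^{(n)}_{i,j}=0$ for all $n$ and all $i$ with $E\eta_i(1)>0$, giving \ref{thm:lin dep 1}.

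The remaining case, where every type occurs in $\cC$, is the heart of the argument and the step I expect to be the main obstacle. Now $\bv^\top\bxi_i(1,1)$ is degenerate for all $i$, so for integer $\bv$ each $(\bM\bv)_i=E[\bv^\top\bxi_i(1,1)]$ is an integer. Since $\cC\subseteq\Z_+^p$, the affine hull of $\cC$ is rational, hence so is $W$, and $L:=W\cap\Z^p$ is a full-rank lattice in $W$; the integrality just noted says precisely that $\bM$ maps $L$ into $L$. Therefore in a lattice basis $\bM|_W$ is an integer matrix, so $\det(\bM|_W)\in\Z$, while the eigenvalues of $\bM|_W$ form a subset of those of $\bM$, giving $|\det(\bM|_W)|\le\varrho(\bM)^{\dim W}<1$. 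An integer of modulus below $1$ is $0$, so $\bM|_W$ is singular and there is $\bc\in W$, $\bc\neq\bnull$, with $\bM\bc=\bnull$; for this $\bc$ the degenerate value of $\bc^\top\bxi_i(1,1)$ is $(\bM\bc)_i=0$, so $\bc^\top\bxi_i(1,1)=0$ a.s.\ for all $i$, and $\bc^\top\bfeta(1)$ is degenerate since $\bc\in W$. This is exactly \ref{thm:lin dep 2}. The delicate points are the rationality of $W$ and the $\bM$-invariance of the lattice $L$, since these are what convert the soft bound $\varrho(\bM)<1$ into the hard existence of a nonzero offspring-annihilating direction.
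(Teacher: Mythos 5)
Your proof is correct, and the crucial step is carried out by a genuinely different mechanism than in the paper. Both arguments share the same first phase: closedness of $\cC$ gives that $\bv^\top\bX_1$ is almost surely constant under $P_\bx$ for $\bx\in\cC$ and $\bv$ orthogonal to the affine hull; independence of the summands in \eqref{eq:GW def} then yields degeneracy of $\bv^\top\bxi_i(1,1)$ and $\bv^\top\bfeta(1)$, and identifying the degenerate values with the means shows that your space $W$ (the paper's $\cV^\perp$) is invariant under the map $\psi(\bc)=\bM\bc$. From this point the paper argues probabilistically: since the chain reaches $\cC$ from every initial state, there is a state $\bz\notin\cS$ from which $\cS$ is entered in one step; plugging $\bz$ into the same degeneracy identity shows that $\psi(\cV^\perp)$ is orthogonal to the nonzero component $\bx^\perp$ of $\bz$, hence $\psi(\cV^\perp)\subsetneq\cV^\perp$, so $\psi$ restricted to $\cV^\perp$ is singular and a kernel vector $\bc^*$ exists. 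You instead argue arithmetically: $W$ is a rational subspace because $\cC\subseteq\Z_+^p$, so $L=W\cap\Z^p$ is a full-rank lattice; integrality of the degenerate values gives $\bM L\subseteq L$, hence $\det(\bM|_W)\in\Z$, while the eigenvalues of $\bM|_W$ are among those of $\bM$, forcing $|\det(\bM|_W)|\leq\varrho(\bM)^{\dim W}<1$; an integer of modulus below $1$ vanishes, so $\bM|_W$ has a kernel. Your route exploits subcriticality directly through the eigenvalue bound and needs only closedness of $\cC$, not the global reachability statement of Theorem \ref{thm:irreducibility}; it also makes explicit, via the ``occurring types contain the innovation types and are closed under the offspring relation'' argument, the implication (type absent from $\cC$ $\Rightarrow$ type dies out) that the paper uses only implicitly. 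The paper's route, conversely, involves no integrality or lattice structure, so it would survive in settings where the state space is not a subset of $\Z_+^p$.

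One caveat: your sufficiency argument for \ref{thm:lin dep 1} passes through the stationary distribution $\pi$, but Theorem \ref{thm:lin dep} assumes only stability, not the logarithmic moment condition of Theorem \ref{thm:irreducibility}, so $\pi$ need not exist. This is easily repaired, and is exactly how the paper argues: $\cC$ is by construction the union of the ranges of the innovation-descendant variables $\bV_n$, $n\geq n^*$, and if type $j$ dies out then the $j$-th component of every $\bV_n$ vanishes almost surely, so $\cC\subseteq\{\bx\in\R^p:\be_j^\top\bx=0\}$ directly, with no reference to $\pi$.
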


In our next theorem we investigate the moments of the stationary distribution $\pi$. For this goal let $\cZ$ be the set of those states $\bx=(x_1,\dots,x_p)^\top\in\Z_+^p$ for which we have $x_i=0$ for every type $i$ that dies out. Note that a type that dies out can not be an offspring of any type that does not die out, and there is no innovation in types that die out. This implies that $\cZ$ is a closed subset of the state space  in the sense that $P_\bx(\bX_1\in\cZ)=1$ for every $\bx\in\cZ$. Also, by Theorem \ref{thm:lin dep} we have $\cC\subseteq\cZ$, but the two sets may not coincide. For any real value $\alpha>0$ consider the set
\[
\cF_\alpha=\big\{f: \Z_+^p\to\R : |f(\bx)|\leq \|\bx\|^\alpha+1, \bx\in\Z_+^p \big\}\,.
\]

\begin{theorem}\label{thm:stationary}
Assume that the subcritical Galton--Watson process $\bX_n$, $n\in\Z_+$, has a stationary distribution $\pi$, and consider a real value $\alpha>0$. Then, the following statements are equivalent:
\begin{enumerate}

\item\label{thm:stationary 1}
The distribution $\pi$ has finite moment of order $\alpha$, that is, $\int_{\Z_+^p} \|\by\|^\alpha \pi(d\by)<\infty$.

\item\label{thm:stationary 2}
We have $E\|\bfeta(1)\|^\alpha<\infty$ and $E\|\bxi_i(1,1)\|^\alpha<\infty$ for all types $i$ that does not die out.

\end{enumerate}
Furthermore, if \ref{thm:stationary 1} or \ref{thm:stationary 2} is satisfied then there exist finite constants $a_1>1$ and $a_2>0$ such that
\begin{equation}\label{eq:meyn}
\sum_{n=0}^\infty a_1^n
\sup_{f\in\cF_\alpha} \bigg| E_\bx f(\bX_n) - \int_{\Z_+^p} f(\by)\pi(d\by) \bigg|
\leq a_2\big( \|\bx\|^\alpha+1\big)\,,
\end{equation}
for every state $\bx\in\cZ$. If, additionally, $E\|\bxi_i(1,1)\|^\alpha<\infty$ for every type $i$, then \eqref{eq:meyn} holds for every $\bx\in\Z_+^p$
\end{theorem}

It is a consequence of the theorem that the supremum in formula \eqref{eq:meyn} is of rate $o(1/a_1^n)$ as $n\to\infty$, implying that $E_\bx f(\bX_n)$ converges to $\int_{\Z_+^p} f(\by)\pi(d\by)$ for every $f\in\cF_\alpha$ at exponential rate. Since the function $f(\bx)=\|\bx\|^\beta$, $\bx\in\Z_+^p$, is an element of $\cF_\alpha$ for every $\beta\in[0,\alpha]$, we also obtain that
\[
\sum_{n=0}^\infty a_1^n
\sup_{\beta\in[0,\alpha]} \bigg| E_\bx\|\bX_n\|^\beta - \int_{\Z_+^p} \|\by\|^\beta \pi(d\by) \bigg| \leq a_2\big( \|\bx\|^r+1\big)\,,
\quad
n\to\infty\,.
\]
This means that those moments of the process which are of order at most $\alpha$ converge uniformly to the related moments of the stationary distribution.

Let us note that the finiteness of the mean matrix $\bM$ implies that $E\|\bxi_i(1,1)\|^\alpha<\infty$ holds for every type $i$ and for every $\alpha\in(0,1]$. Also, the indicator function $\1_B$ of an arbitrary set $B\subseteq\Z_+^p$ is an element of $\cF_\alpha$, and we have $E_\bx \1_B(\bX_n)=P_\bx(\bX_n\in B)$ and $\int_{\Z_+^p} \1_B(\by)\pi(d\by)=\pi(B)$. These facts along with Theorem \ref{thm:stationary} immediately implies the following statement.

\begin{corollary}\label{cor:geom erg}
Assume that the Galton--Watson process $\bX_n$, $n\in\Z_+$, is subcritical and $E\|\bfeta(1)\|^\alpha<\infty$ with some $\alpha>0$. Then,
\[
\sum_{n=0}^\infty a_1^n
\sup_{B\subseteq\Z_+^p} \big| P_\bx(\bX_n\in B) - \pi(B) \big|<\infty\,,
\qquad
\bx\in\Z_+^p\,,
\]
meaning that the process is geometrically ergodic.
\end{corollary}

As a final remark we note that inequality \eqref{eq:meyn} can be stated in an unconditional form too, where the initial value of the process is not fixed. If condition \ref{thm:stationary 2} of Theorem \ref{thm:stationary} is satisfied, the distribution of $\bX_0$ is concentrated to the set $\cZ$, and $E\|\bX_0\|^\alpha<\infty$, then by conditioning with respect to $\bX_0$ we obtain the inequality
\[
\sum_{n=0}^\infty a_1^n
\sup_{f\in\cF_\alpha} \bigg| Ef(\bX_n) - \int_{\Z_+^p} f(\by)\pi(d\by) \bigg|
\leq a_2\big( E\|\bX_0\|^\alpha+1\big)<\infty\,.
\]
Furthermore, if all of the offspring variables have finite moment of order $\alpha$, then the restriction of the initial distribution to $\cZ$ can be omitted. Using this result one can show the uniform convergence of the unconditional probabilities $P(\bX_n\in B)$, $B\subseteq\Z_+^p$, similarly as of the conditional ones in Corollary \ref{cor:geom erg}.


\section{Proofs}

In this section we present the proofs of the results stated in Section \ref{sec:main results}. The first statement is a fundamental observation in the subject of our paper.

\begin{proposition}\label{prop:eigenvector}
Let $\bA\in\R^{p\times p}$ be a matrix having only non-negative entries. If $\varrho(\bA)<1$, then there exist a constant $\lambda\in(0,1)$ and a vector $\bv\in\R^p$ such that all components of $\bv$ are strictly positive and $\bA\bv\leq \lambda\bv$.
\end{proposition}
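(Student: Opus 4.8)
The plan is to construct the vector $\bv$ explicitly from the resolvent of $\bA$ and then extract a uniform contraction factor $\lambda$. The condition $\varrho(\bA)<1$ guarantees that $\bA^n\to\bnull$ and that the Neumann series $\sum_{n=0}^\infty\bA^n$ converges to $(I-\bA)^{-1}$, where $I$ denotes the $p\times p$ identity matrix. Since every power $\bA^n$ has non-negative entries, the same is true of $(I-\bA)^{-1}$; this non-negativity is exactly what lets the construction respect the componentwise order $\leq$.

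First I would set
\[
\bv:=(I-\bA)^{-1}\bfone=\sum_{n=0}^\infty\bA^n\bfone\,.
\]
The $n=0$ term already gives $\bv\geq\bfone$, so every component of $\bv$ is at least $1$ and in particular strictly positive, as required. Applying $\bA$ and shifting the summation index yields
\[
\bA\bv=\sum_{n=1}^\infty\bA^n\bfone=\bv-\bfone\,,
\]
so that $\bA\bv\leq\bv$ with a strict deficit of $\bfone$ in every coordinate.

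The remaining step is to upgrade this coordinatewise-strict inequality into a bound $\bA\bv\leq\lambda\bv$ with a single $\lambda\in(0,1)$, and this is the only point that needs a small argument. Writing $M:=\max_{1\leq i\leq p}v_i<\infty$ we have $v_i\leq M$ for each $i$, hence $\bfone\geq M^{-1}\bv$ componentwise. Substituting into the identity above gives
\[
\bA\bv=\bv-\bfone\leq\bv-M^{-1}\bv=\big(1-M^{-1}\big)\bv\,.
\]
Since $\bv\geq\bfone$ forces $M\geq1$, the factor $1-M^{-1}$ lies in $[0,1)$, and choosing $\lambda:=\max\{1-M^{-1},\tfrac12\}\in(0,1)$ preserves the inequality because $\bv\geq\bnull$. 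This completes the construction.

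The conceptual obstacle, such as it is, lies in passing from the merely asymptotic decay encoded by $\varrho(\bA)<1$ to an explicit sub-invariant vector carrying a genuine uniform rate; the resolvent construction resolves this cleanly and, unlike a perturbation argument, avoids invoking the Perron--Frobenius theorem. An alternative would be to perturb $\bA$ to the strictly positive matrix $\bA+\varepsilon\bfone\bfone^\top$, which for small $\varepsilon>0$ still has spectral radius below $1$ by continuity of $\varrho$, take its Perron eigenvector $\bv>0$ with eigenvalue $\lambda=\varrho(\bA+\varepsilon\bfone\bfone^\top)\in(0,1)$, and observe $\bA\bv\leq(\bA+\varepsilon\bfone\bfone^\top)\bv=\lambda\bv$; I would nonetheless prefer the resolvent route for its self-containedness.
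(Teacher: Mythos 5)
Your proof is correct, but it follows a genuinely different route from the paper. The paper perturbs $\bA$ to a strictly positive matrix by adding $\varepsilon>0$ to every entry, invokes continuity of the spectral radius in the matrix entries to keep $\lambda:=\varrho(\bA+\varepsilon)<1$, and then takes the Perron--Frobenius eigenvector of the perturbed matrix, so that $\bA\bv\leq(\bA+\varepsilon)\bv=\lambda\bv$; this is exactly the ``alternative'' you sketch in your closing remark. Your resolvent construction $\bv=(I-\bA)^{-1}\bfone=\sum_{n=0}^\infty\bA^n\bfone$ is more elementary and self-contained: positivity of $\bv$ is immediate from $\bv\geq\bfone$, the identity $\bA\bv=\bv-\bfone$ is a one-line index shift, and your passage to a uniform $\lambda$ via $M=\max_i v_i$ is sound (including the floor $\max\{1-M^{-1},\tfrac12\}$ to keep $\lambda$ strictly positive, and the observation that enlarging the scalar preserves the inequality because $\bv\geq\bnull$). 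What the paper's argument buys in exchange for quoting Perron--Frobenius is quantitative control: its $\lambda$ can be made arbitrarily close to $\varrho(\bA)$ by shrinking $\varepsilon$, whereas your $\lambda=1-M^{-1}$ can be close to $1$ even when $\varrho(\bA)$ is tiny (for instance, a nilpotent matrix with one huge off-diagonal entry gives $\varrho(\bA)=0$ but $M$ large). For every use of the proposition in this paper --- the extinction estimate in Theorem \ref{thm:irreducibility} and the Foster--Lyapunov drift in Theorem \ref{thm:stationary} --- any fixed $\lambda\in(0,1)$ suffices, so your version is fully adequate here; one would only prefer the paper's sharper $\lambda$ if the geometric rate $a_1$ were to be optimized.
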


\begin{proof}
Since the eigenvalues are continuous functions of the matrix entries, there exists an $\varepsilon>0$ such that $\lambda:=\varrho(\bA+\varepsilon)<1$. Then, the Perron--Frobenius theorem implies that the positive matrix $\bA+\varepsilon$ has an eigenvector $\bv$ with eigenvalue $\lambda$ such that all components of $\bv$ are strictly positive. With this vector we get the inequality $\bA\bv\leq (\bA+\varepsilon)\bv=\lambda\bv$.
\end{proof}

\begin{proof}[Proof of Theorem \ref{thm:irreducibility}]
We will use the representation of the multitype Galton--Watson process $\bX_n$, $n\in\Z_+$, provided by Section 2.7 of \citet{mode71}. Let the vectors $\bY_n$ and $\bV_{k+n}(k)$, $n,k=1,2,\dots$, stand for the number of the $n$-th generation offsprings of the initial population $\bX_0$ and of the innovation variable $\bfeta(k)$, respectively. Also, let $\bY_0:=\bX_0$ and $\bV_n(n):=\bfeta(n)$ for every $n$. Then,  we obtain the representation of \citet{mode71} in the form
\begin{equation}\label{eq:reprezentation}
\bX_n=\bY_n+\bZ_n:=\bY_n+\bV_n(1)+\cdots+\bV_n(n)\,,
\qquad
n=1,2,\dots,
\end{equation}
and the independence of the variables in \eqref{eq:ind variables} implies that $\bY_n,\bV_n(1),\dots,\bV_n(n)$ are independent of each other. (This equation can be proved by standard calculations too, by showing that the probability generation function of $\bX_n$ is equal to the product of the probability generating functions of the variables on the right side.) From the definitions of the variables it follows that the sequences $\bZ_n=\bV_n(1)+\cdots+\bV_n(n)$, $n=1,2,\dots$, and $\bY_n$, $n\in\Z_+$, are independent of each other. Let us note that $\bY_n$, $n\in\Z_+$, is a multitype Galton--Watson process without immigration, which implies that this process becomes extinct in finitely many steps with probability 1 in case of any initial distribution.

For $n=1,2,\dots$ let $\cD_n\subseteq\Z_+^p$ denote the range of the variable $\bZ_n$, that is, the set of the states $\bx\in\Z_+^p$ for which $P(\bZ_n=\bx)>0$. Since $\bZ_n$ and $\bZ_{n+1}$ are independent of $\bX_0$, and on the event $\{\bX_0=\bnull\}$ we have $\bX_n=\bZ_n$ and $\bX_{n+1}=\bZ_{n+1}$, we get that
\[
\begin{split}
0 & =P\big(\bZ_{n+1}\not\in\cD_{n+1}\mid\bX_0=\bnull \big)
=P\big(\bX_{n+1}\not\in\cD_{n+1}\mid \bX_0=\bnull\big) \\
& =\sum_{\bx\in\Z^p_+} P\big(\bX_{n+1}\not\in\cD_{n+1}\mid \bX_n=\bx, \bX_0=\bnull \big)
P\big(\bX_n=\bx\mid \bX_0=\bnull\big) \\
& = \sum_{\bx\in\cD_n} P_\bx(\bX_1\not\in\cD_{n+1}) P(\bZ_n=\bx)\,.
\end{split}
\]
Because the terms of the last sum are nonnegative, it follows from the definition of $\cD_n$ that $P_\bx(\bX_1\not\in\cD_{n+1})=0$ for every $\bx\in\cD_n$. It is a consequence that the set $\cC_n:=\cup_{k=n}^\infty \cD_k\subseteq\Z_+^p$ is closed for any $n$ in the sense that $P_\bx(\bX_1\in\cC_n)=1$ holds for every $\bx\in\cC_n$.

Let us recall that the sequence $\bY_n$, $n\in\Z_+$, is a subcritical Galton--Watson process without immigration, which implies that $P_\bx(\bY_n=\bnull)\to 1$ as $n\to\infty$ for any $\bx\in\Z_+^p$. This means that there exists an integer $n^*(\bx)$ such that $P_\bx(\bY_n=\bnull)>0$ holds for every $n\geq n^*(\bx)$. Because the number of the multigeneration offsprings of the members of the initial population are independent of each other, we obtain that
\[
P_\bx(\bY_n=\bnull)= P_{\be_1}(\bY_n=\bnull)^{x_1}\cdots P_{\be_p}(\bY_n=\bnull)^{x_n}>0
\]
for every $n\geq n^*:=\max(n^*(\be_1),\dots,n^*(\be_p))$. That is, the process $\bY_n$, $n\in\Z_+$, dies out in $n^*$ steps with positive probability in case of any initial state $\bx$.

Consider an arbitrary integer $n\geq n^*$ and states $\bx\in\Z_+^p$, $\bz\in\cD_n$. The independence of $\bZ_n$ of the variables $\bX_0=\bY_0$ and $\bY_n$ implies the inequality
\begin{equation}\label{eq:communicate}
P_\bx(\bX_n=\bz)\geq P_\bx \big(\bY_n=\bnull,\bZ_n=\bz\big)=P_\bx(\bY_n=\bnull) P(\bZ_n=\bz)>0\,,
\end{equation}
meaning that every elements of $\cD_n$ are accessible from the arbitrary state $\bx$ in $n$ steps. It is a consequence that the elements of the set $\cC:=\cC_{n^*}$ communicate with each other. Since $\cC$ is closed, it is a communication class of the process $\bX_n$, $n\in\Z_+$. Consider an arbitrary state $\bz\in\cD_n$ and a non-negative integer $m$, and let $\bx\in\Z_+^p$ be a state such that $P_\bz(\bX_m=\bx)>0$. Using equation \eqref{eq:communicate} again we find that
\[
P_\bz(\bX_{n+m}=\bz) \geq P_\bz(\bX_m=\bx)P_\bx(\bX_n=\bz)>0\,,
\]
that is, state $\bz$ is accessible from itself in $n+m$ steps. Since $m$ was arbitrary non-negative integer, the communication class $\cC$ is aperiodic.

To prove the theorem it is only remained to show that the process $\bX_n$, $n\in\Z_+$, reaches the class $\cC$ in finitely many steps with probability 1 in case of any initial distribution. Because the state space is countable, it is enough to prove this statement under the condition $\{\bX_0=\bx\}$ where $\bx\in\Z_+^p$ is an arbitrary fixed state. Note that $\{\bY_n=\bnull\}$, $n\in\Z_+$, is an increasing sequence of events. From this we get that
\[
\begin{split}
& P_\bx\big( \exists n\geq n^*: \bX_n\in\cC\big)
\geq P_\bx\big( \exists n\geq n^*: \bX_n\in\cD_n\big)
\geq P_\bx\big( \exists n\geq n^*: \bY_n=\bnull\big) \\
& \qquad
= P_\bx \big( \cup_{n=n^*}^\infty \{\bY_n=\bnull\} \big)
= \lim_{n\to\infty} P_\bx(\bY_n=\bnull)=1\,,
\end{split}
\]
which completes the proof.
\end{proof}

In the next step we prove Theorem \ref{thm:stat distr}. For this goal we need some technical results stated in Propositions \ref{prop:compound}--\ref{prop:GW dominant}.

\begin{proposition}\label{prop:compound}
Consider independent and identically distributed nonnegative valued random variables $\xi_1,\xi_2\dots$ such that $0<E\xi_1<\infty$. Also, let $\eta$ be a nonnegative integer valued random variable being independent of $\xi_1,\xi_2\dots$. Then,
\[
E\log\Bigg(\sum_{k=1}^\eta \xi_k + 1\Bigg)<\infty
\qquad\textrm{if and only if}\qquad
E\log(\eta+1)<\infty.
\]
\end{proposition}

\begin{proof}
First, assume that $E\log(\eta+1)$ is finite. By conditioning with respect to $\eta$ and using Jensen's inequality to the logarithm function we get that
\[
\begin{split}
& E\log\Bigg(\sum_{k=1}^\eta \xi_k + 1\Bigg)
=E E \Bigg[ \log\Bigg(\sum_{k=1}^\eta \xi_k + 1\Bigg) \,\Big|\, \eta\Bigg]
\leq E\log\Bigg( E \Bigg[ \sum_{k=1}^\eta \xi_k \,\Big|\, \eta \Bigg] + 1\Bigg) \\
& = E\log \big( \eta E\xi_1 + 1\big)
\leq
\begin{cases}
E\log( \eta+ 1)<\infty, & \textrm{if } E\xi_1\leq 1, \\
\log E\xi_1 + E\log(\eta+ 1)<\infty, & \textrm{if } E\xi_1\geq 1.
\end{cases}
\end{split}
\]

Now, consider the case when $E\log(\eta+1)$ is infinite. By the assumptions there exists a constant $c\in(0,1)$ such that $p:=P(\xi_1\geq c)>0$. Since we have $\xi_k\geq c\1_{\{\xi_k\geq c\}}$ with probability 1 for every $k$, we obtain the inequalities
\begin{equation}\label{eq:log compound}
E\log\Bigg(\sum_{k=1}^\eta \xi_k + 1\Bigg)
\geq E \log\Bigg(\sum_{k=1}^\eta c \1_{\{\xi_k\geq c\}} + 1\Bigg)
\geq \log c + E \log\Bigg(\sum_{k=1}^\eta \1_{\{\xi_k\geq c\}} + 1\Bigg)\,.
\end{equation}
Let $\zeta_n$ stand for a random variable having binomial distribution with parameters $n$ and $p$. Chebishev's inequality implies that
\[
P\big(\zeta_n\geq np-n^{1/2}\big)
\geq P\big( |\zeta_n-E\zeta_n|\leq n^{1/2}\big)
\geq 1-\frac{np(1-p)}{n}\geq \frac{3}{4}.
\]
Because the conditional distribution of the sum $\sum_{k=1}^\eta \1_{\{\xi_k\geq c\}}$ with respect to the event $\{\eta=n\}$ is the same as the law of $\zeta_n$ we get that
\begin{equation}\label{eq:binom ineq}
E \log\Bigg(\sum_{k=1}^\eta \1_{\{\xi_k\geq c\}} + 1\Bigg)
= \sum_{n=0}^\infty E\log(\zeta_n+1)P(\eta=n)
\geq \frac{3}{4} \sum_{n=0}^\infty \log\big(np-n^{1/2}+1\big) P(\eta=n).
\end{equation}
If $n$ is large enough, then we have
\[
\log\big(np-n^{1/2}+1\big)\geq \log\big((n+1)p/2\big)=\log(n+1)+\log(p/2).
\]
Since the expectation $E\log(\eta+1)$ is infinite, the sum on the right side of \eqref{eq:binom ineq} is divergent. Then, the proposition is proved by inequality \eqref{eq:log compound}.
\end{proof}

\begin{proposition}\label{prop:Markov dominant}
Let $\bX_n$ and $\bX'_n$, $n\in\Z_+$, be irreducible time-homogeneous Markov chains on some state spaces $\cC\subseteq\Z_+^p$ and $\cC'\subseteq\Z_+^p$. Assume that there exist states $\bx_0\in\cC$, $\bx'_0\in\cC'$, $\bx_0\leq\bx'_0$, such that the variables $\bX_1,\bX_2,\dots$ are conditionally independent of $\bX'_0$ with respect to the event $\{\bX_0=\bx_0\}$, and $\bX'_1,\bX'_2,\dots$ are conditionally independent of $\bX_0$ with respect to $\{\bX'_0=\bx'_0\}$. Furthermore, assume that
\begin{equation}\label{eq:Markov dominant}
P\big(\bX_n\leq\bX'_n\mid \bX_0=\bx_0,\bX'_0=\bx'_0\big)=1,
\qquad
n=1,2,\dots
\end{equation}
Then the following holds:
\begin{enumerate}

\item
If $\bX'_n$, $n\in\Z_+$, is recurrent, then $\bX_n$, $n\in\Z_+$, is recurrent.

\item
If $\bX'_n$, $n\in\Z_+$, is positive recurrent, then $\bX_n$, $n\in\Z_+$, is positive recurrent.

\end{enumerate}
\end{proposition}

\begin{proof}
Let $p_\bX^{(n)}(\cdot,\cdot)$ and $p_{\bX'}^{(n)}(\cdot,\cdot)$ denote the $n$-step transition probabilities of the processes, and let $P_A$ stand for the conditional probability with respect to $A:=\{\bX_0=\bx_0,\bX'_0=\bx'_0\}$. Also, introduce the set $\cC_0:=\{\bx\in\cC: \bx\leq\bx'_0\}$. In our proof we will use the well-know characterization of the types of states of Markov chains based on the asymptotic behavior of the transition probabilities. (See the main theorem in Section XV.5 of \citet{feller68}, for example.)

If the chain $\bX'_n$, $n\in\Z_+$, is recurrent then the assumptions and the characterization of recurrent states imply that
\[
\sum_{\bx\in\cC_0} \sum_{n=0}^\infty p_\bX^{(n)} (\bx_0,\bx)
= \sum_{n=0}^\infty P_A(\bX_n\in\cC_0)
\geq \sum_{n=0}^\infty P_A(\bX'_n=\bx'_0)
= \sum_{n=0}^\infty p_{\bX'}^{(n)} (\bx'_0,\bx'_0)
=\infty\,.
\]
Hence, there exists a state $\bx^*\in\cC_0$ such that $\sum_{n=0}^\infty p_\bX^{(n)} (\bx_0,\bx^*)=\infty$. Since the process $\bX_n$, $n\in\Z_+$, is irreducible, we have $p_\bX^{(k)}(\bx^*,\bx_0)>0$ for some $k\in\Z_+$. This leads to the inequality
\[
\sum_{n=0}^\infty p_\bX^{(n+k)} (\bx_0,\bx_0)
\geq \sum_{n=0}^\infty p_\bX^{(n)} (\bx_0,\bx^*) p_\bX^{(k)} (\bx^*,\bx_0)=\infty\,,
\]
meaning that $\bx_0$ is a recurrent state of the chain $\bX_n$, $n\in\Z_+$, and the first statement is proved.

Similarly, if the process $\bX'_n$, $n\in\Z_+$, is positive recurrent, then
\[
\begin{split}
\sum_{\bx\in\cC_0} \limsup_{n\to\infty} p_\bX^{(n)} (\bx_0,\bx)
& \geq \limsup_{n\to\infty} P_A(\bX_n\in\cC_0)
\geq \limsup_{n\to\infty} P_A(\bX'_n=\bx'_0) \\
& = \limsup_{n\to\infty} p_{\bX'}^{(n)} (\bx'_0,\bx'_0)>0\,.
\end{split}
\]
This implies that $\limsup_{n\to\infty} p_\bX^{(n)} (\bx_0,\bx^*)>0$ for some state $\bx^*\in\cC_0$. Again, if $k\in\Z_+$ is a constant such that  $p_\bX^{(k)}(\bx^*,\bx_0)>0$, then
\[
\limsup_{n\to\infty} p_\bX^{(n+k)} (\bx_0,\bx_0)
\geq \limsup_{n\to\infty} p_\bX^{(n)} (\bx_0,\bx^*) p_\bX^{(k)} (\bx^*,\bx_0) >0\,.
\]
From this inequality the characterization of the types implies that the chain $\bX_n$, $n\in\Z_+$, is positive recurrent, and the proof is complete.
\end{proof}

\begin{proposition}\label{prop:GW dominant}
Consider subcritical $p$-type Galton--Watson processes $\bX_0,\bX_1,\dots$ and $\bX'_0,\bX'_1,\dots$ based on the offspring and innovation vectors $\bxi_i(n,k)$, $\bfeta(n)$ and $\bxi'_i(n,k)$, $\bfeta'(n)$, $i=1,\dots,p$, $n,k=1,2,\dots$, respectively. Assume that all of these offspring and innovation vectors are independent of $\bX_0$  and $\bX'_0$, and assume that $\bxi_i(n,k)\leq\bxi'_i(n,k)$ and $\bfeta(n)\leq\bfeta'(n)$ hold for every $i$, $n$ and $k$ with probability 1. Under these assumptions if the process $\bX'_0,\bX'_1,\dots$ has a stationary distribution, then $\bX_0,\bX_1,\dots$ has a stationary distribution, as well.
\end{proposition}

\begin{proof}
Let $\cC$ and $\cC'$ stand for the unique closed communication classes of the processes provided by Theorem \ref{thm:irreducibility}, and consider arbitrary values $\bx_0,\bx'_0\in\Z_+^p$ such that $\bx_0\leq\bx'_0$. By the branching mechanism the processes satisfy the independence conditions of Proposition \ref{prop:Markov dominant}, and it can be shown by recursion with respect to $n$ that \eqref{eq:Markov dominant} also holds. Since the process $(\bX_n,\bX'_n)$, $n\in\Z_+$, reaches the set $\cC\times\cC'$ in finitely many steps with probability 1, formula \eqref{eq:Markov dominant} implies that there exists states $\bx\in\cC$ and $\bx'\in\cC'$ such that $\bx\leq\bx'$. This means that the initial values $\bx_0$ and $\bx'_0$ can be chosen as elements of these classes, respectively. Since $\cC$ and $\cC'$ are closed, we can restrict the processes to these sets, resulting that the restricted processes satisfy all conditions of Proposition \ref{prop:Markov dominant}.

If $\bX'_0,\bX'_1,\dots$ has a stationary distribution, then this distribution must be concentrated to $\cC'$, the only closed communication class. This implies that the restriction of the process to $\cC'$ is a positive recurrent Markov chain. Then, by Proposition \ref{prop:Markov dominant} the process $\bX_0,\bX_1,\dots$ is positive recurrent on $\cC$, as well, and by the theory of Markov chains the latter process has a stationary distribution.
\end{proof}

\begin{remark}\label{remark:no stat disrt}
Consider a single-type Galton--Watson process $X'_0,X'_1,\dots$ defined by
\begin{equation}\label{eq:single-type def}
X'_n=\sum_{k=1}^n \xi'(n,k)+\eta'(n),
\qquad
n=1,2,\dots
\end{equation}
such that $E\xi'(1,1)>0$ and $E\log(\eta'(1)+1)=\infty$. Then, it can be shown by using some classical results on Galton--Watson processes that $X'_0,X'_1,\dots$ does not have any stationary distribution. For example, this result was proved in the subcritical case by \citet{foster71} in their Corollary 2. To illustrate the application of our Proposition \ref{prop:GW dominant} we present a short and simple proof for the remaining cases.

Let $X_0,X_1,\dots$ denote the single-type Galton--Watson process corresponding to the initial value $X_0:=X'_0$ and to the offspring and innovation variables $\xi(n,k):=\1_{\{\xi'(n,k)\geq 1\}}$ and $\eta(n):=\eta'(n)$, $n,k=1,2,\dots$ This process is defined by replacing the vectors $\xi'(n,k)$ and $\eta'(n)$ in the recursion \eqref{eq:single-type def} by $\xi(n,k)$ and $\eta(n)$, respectively. Note that the processes $X_n$ and $X'_n$, $n\in\Z_+$, satisfy the assumptions of Proposition \ref{prop:GW dominant}, and the moment conditions imply that $E\xi(1,1)>0$ and $E\log(\eta(1)+1)=\infty$. If $\xi(1,1)=0$ with positive probability, then $E\xi(1,1)<1$, meaning that $X_0,X_1,\dots$ is subcritical. From this the referred result of \citet{foster71} implies that $X_0,X_1,\dots$ does not have any stationary distribution. If $\xi(1,1)=1$ with probability 1, then $X_n\to\infty$ almost surely as $n\to\infty$, resulting that $X_0,X_1,\dots$ does not have any stationary distribution in this case neither. Then, by using Proposition \ref{prop:GW dominant} we immediately obtain the statement.
\end{remark}

\begin{proof}[Proof of Theorem \ref{thm:stat distr}]
First, we show the existence of a stationary distribution under the logarithmic moment condition of the theorem in the case when the set $I$ contains all types. Let us recall that the eigenvalues are continuous functions of the matrix entries. Since the process $\bX_n$, $n\in\Z_+$, is subcritical, there exists an $\varepsilon>0$ such that $\varrho(\bM')<1$ with $\bM':=\bM+\varepsilon$. Let $\bfone\in\R^p$ denote the vector whose components are equal to $1$, and consider random vectors $\1_i(n,k)$, $i=1,\dots,p$, $n,k=1,2,\dots$, being independent of each other and of the variables in formula \eqref{eq:ind variables} and having common distribution $P(\1_i(n,k)=\bfone)=\varepsilon$ and $P(\1_i(n,k)=\bnull)=1-\varepsilon$. Additionally, consider the multitype Galton--Watson process $\bX'_n$, $n\in\Z_+$, defined by replacing the variables \eqref{eq:ind variables} in the recursion \eqref{eq:GW def} by the the initial value $\bX'_0:=\bX_0$ and by the offspring and innovation variables
\[
\bxi'_i(n,k):=\bxi_i(n,k)+\1_i(n,k)\,,
\qquad
\bfeta'(n):=\bfeta(n)\,,
\qquad
i=1,\dots,p\,,
\quad
n,k=1,2,\dots
\]
Note that the mean matrix of the new process is $E[\bxi'_1(1,1),\dots, \bxi'_p(1,1)]^\top =\bM'$, where $\bM'$ is a positive matrix and $\rho(\bM')<1$. For any $a_1,\dots,a_p\geq 0$ we have the algebraic inequality
\[
\log(a_1+1)+\cdots+\log(a_n+1)
=\log\big((a_1+1)\cdots(a_n+1)\big)
\geq \log(a_1+\cdots+a_n+1)\,.
\]
From this we obtain that
\begin{equation}\label{eq:log moment}
E\log\big(\|\bfeta(1)\|+1\big)\leq \sum_{i=1}^p E\log\big(\eta_i(1)+1\big)<\infty\,.
\end{equation}
meaning that $\sum_{k=1}^\infty \log k P\big(\|\bfeta(1)\|=k\big)$ is finite. Then, Corollary 1 of \citet{kaplan73} implies that the process $\bX'_n$, $n\in\Z_+$, has a stationary distribution. Since the offspring and the innovation variables satisfy the conditions of our Proposition \ref{prop:GW dominant}, it follows that the process $\bX_n$, $n\in\Z_+$, has a stationary distribution too.

Now, assume that the logarithmic moment condition of the theorem holds, and consider the case when $I$ does not contain all types. Let $I^c$ stand for the complement of set $I$, and define the random vectors $\bfeta^I(n)$ and $\bfeta^{I^c}(n)$, $n=1,2,\dots$, by their $i$-th components
\[
\eta^I_i(n)
:=\begin{cases}
\eta_i(n), & i\in I, \\
0, & i\in I^c,
\end{cases}
\qquad
\eta^{I^c}_i(n)
:=\begin{cases}
0, & i\in I, \\
\eta_i(n), & i\in I^c,
\end{cases}
\qquad
i=1,\dots,p.
\]
Let $\bV^I_{k+n}(k)$ and $\bV^{I^c}_{k+n}(k)$, $n=0,1,\dots$, denote the number of the $n$-th generation offsprings of the innovation variables $\bfeta^I(k)$ and $\bfeta^{I^c}(k)$, respectively, and introduce the $\Z^p_+$-valued processes
\begin{equation}\label{eq:decomposition}
\bZ^I_n:=\bV^I_n(1)+\cdots+\bV^I_n(n),
\qquad
\bZ^{I^c}_n:=\bV^{I^c}_n(1)+\cdots+\bV^{I^c}_n(n),
\qquad
n=1,2,\dots
\end{equation}
Based on the construction, the sequences in \eqref{eq:decomposition} are multitype Galton--Watson processes corresponding to the initial states $\bZ^I_0:=\bZ^{I^c}_0:=\bnull$ and to the innovation variables $\bfeta^I(1),\bfeta^I(2),\dots$ and $\bfeta^{I^c}(1),\bfeta^{I^c}(2),\dots$, respectively, having the same offspring distributions as the original process $\bX_n$, $n\in\Z_+$. Also, we have $\bV^I_n(k)+\bV^{I^c}_n(k)=\bV_n(k)$ for every $n$ and $k$, implying the identity $\bZ_n=\bZ^I_n+\bZ^{I^c}_n$, $n\in\Z_+$.

Consider random pairs $(\bU^I_n,\bU^{I^c}_n)$, $n=1,2,\dots$, which are independent of each other and of the initial variable $\bX_0$ such that $(\bU^I_n,\bU^{I^c}_n)$ has the same distribution as $(\bV^I_n(1),\bV^{I^c}_n(1))$ for every $n$. Note that the pairs $(\bV^I_n(k),\bV^{I^c}_n(k))$, $k=1,\dots,n$, are independent of each other, and $(\bV^I_n(k),\bV^{I^c}_n(k))$ has the same distribution as $(\bV^I_{n-k}(1),\bV^{I^c}_{n-k}(1))$, respectively. Then, for every $n$ the joint distribution of $(\bV^I_n(k),\bV^{I^c}_n(k))$, $k=1,\dots,n$, is the same as the joint distribution of $(\bU^I_{n-k+1},\bU^{I^c}_{n-k+1})$, $k=1,\dots,n$. This implies that
\begin{equation}\label{eq:vector sum}
\myvector{\bZ^I_n}{\bZ^{I^c}_n}
= \sum_{k=1}^n \myvector{\bV^I_n(k)}{\bV^{I^c}_n(k)}
\eq{\cD} \sum_{k=1}^n \myvector{\bU^I_k}{\bU^{I^c}_k}
\to \sum_{k=1}^\infty \myvector{\bU^I_k}{\bU^{I^c}_k},
\qquad
n\to\infty,
\end{equation}
where the convergence is understood in almost sure sense.

Since all components of the innovation variable $\bfeta^I(1)$ have finite logarithmic moment by assumption, the first step of current proof implies that the Galton--Watson process $\bZ^I_n$, $n\in\Z_+$, has a (proper) stationary distribution. Additionally, the process converges to $\sum_{n=1}^\infty \bU^I_n$ in distribtution by formula \eqref{eq:vector sum}. From these we obtain that the law of the limit is the stationary distribution of the process, resulting that $\sum_{n=1}^\infty \bU^I_n$ is convergent with probability 1. Note that the multigeneration offsprings of every members of an arbitrary type $i\in I^c$ vanish at most in $p$ steps. This means that $\bU_n=_\cD\bV^{I^c}_n(1)=\bnull$ for any $n\geq p+1$.  Let us recall that $\bY_n$, $n\in\Z_+$, denotes that number of the $n$-th generation offsprings of the initial population $\bX_0$. Since this process dies out in finitely steps with probability 1 in case of any initial distribution, we obtain the almost sure convergence $\bY_n\to\bnull$, $n\to\infty$. Then, by using \eqref{eq:vector sum} we get that
\[
\bX_n=\bY_n+\bZ_n=\bY_n+\bZ^I_n+\bZ^{I^c}_n
\co{\cD} \bnull + \sum_{k=1}^\infty \bU^I_k + \sum_{k=1}^p \bU^{I^c}_k,
\qquad
n\to\infty,
\]
where the limit variable is finite with probability 1, and its law does not depend on the initial distribution. This convergence implies that the law of the limit variable is a stationary distribution for the Galton--Watson process $\bX_n$, $n\in\Z_+$, in case of an arbitrary set $I$.

We prove the contrary direction ot the theorem by contradiction. For this goal, assume that the process has a stationary distribution $\pi$, and there exist states $j_0,j$ and integers $m_0\geq 0$, $m\geq 1$ such that $M^{(m_0)}_{j_0,j}>0$, $M^{(m)}_{j,j}>0$ and $\sum_{k=1}^\infty \log k P(\eta_{j_0}(1)=k)=\infty$. Since the second inequality implies that $M^{(nm)}_{j,j}>0$ holds for any positive integer $n$, we can assume without the loss of generality that $m>m_0$. Additionally, it also follows that $E\log(\eta_{j_0}(1)+1)=\infty$.

For an arbitrary positive integer $n$ the members of the $nm$-th generation of the process can be divided into two groups. Some of the members are $m$-th generation offsprings of the population $\bX_{(n-1)m}$, and the others are members of the innovation $\bfeta(nm)$ or multigeneration offsprings of $\bfeta((n-1)m),\dots,\bfeta(nm-1)$. By the branching mechanism the distribution of the number of the members in the second group is the same as the distribution of $\bZ_m$. Also, the number of the $m$-th generation offsprings of an arbitrary member of type $i$ in the $(n-1)m$-th generation has the same distribution as the conditional law of $\bY_m$ with respect to $\{\bX_0=\be_i\}$. Furthermore, these variables are independent of each other and of $\bX_{(n-1)m}$. Now, consider random vectors $\bxi_i^{(m)}(n,k),\bfeta^{(m)}(n)$, $i=1,\dots,p$, $n,k=1,2,\dots$ being independent of each other and of $\bX_0$ such that $\bxi_i^{(m)}(n,k)$ has the same law as the conditional distribution of $\bY_m$ under $\{\bX_0=\be_i\}$, and $\bfeta^{(m)}(n)$ has the same distribution as $\bZ_m$, respectively. Then, we have
\begin{equation}\label{eq:multistep GW}
\bX_{nm}\eq{\cD}\sum_{i=1}^p \sum_{k=1}^{X_{(n-1)m,i}} \bxi_i^{(m)}(n,k) + \bfeta^{(m)}(n),
\qquad
n=1,2,\dots
\end{equation}
(We note that this equation can be proved by standard calculations too, by using generating functions.) In the following we assume that equation \eqref{eq:multistep GW} holds in almost sure sense for every $n$. We can do so without the loss of generality, because under this assumption the distribution of the process $\bX_{nm}$, $n\in\Z_+$, does not change.

Let $\bX^*_0,\bX^*_1,\dots$ stands for the $p$-type Galton--Watson process corresponding to the offspring and innovation vectors $\bxi^*_i(n,k):=\1_{\{i=j\}}\bxi^{(m)}_i(n,k)$, $\bfeta^*(n):=\bfeta^{(m)}(n)$, $i=1,\dots,p$, $n,k=1,2,\dots$, with the initial value $\bX^*_0:=\bX_0$. Then, we have $\bxi^*_i(n,k)\leq \bxi^{(m)}_i(n,k)$ and $\bfeta^*(n)\leq\bfeta^{(m)}(n)$ for every $i$, $n$ and $k$ with probability 1, and $(\bX_0,\bX^*_0)$ is independent of all of these offpsring and innovation variables. Since $\pi$ is a stationary distribution of the Markov chain $\bX_n$, $n\in\Z_+$, it is a stationary distribution of the subsequence $\bX_{nm}$, $n\in\Z_+$. Then, Proposition \ref{prop:GW dominant} implies that the process $\bX^*_n$, $n=0,1,\dots$, has a stationary distribution $\pi^*$ too. Observe that we have
\[
X^*_{n,j}=\sum_{k=1}^{X^*_{n-1,j}} \xi^*_{j,j}(n,k) + \xi^*_{j,j}(n,k),
\qquad
n=1,2,\dots,
\]
meaning that $X^*_{n,j}$, $n\in\Z_+$, is a single-type Galton--Watson process. Then, the $j$-th marginal of the measure $\pi^*$ is a stationary distribution for this Markov chain. However, in the next step we show that $X^*_{n,j}$, $n\in\Z_+$, does not have any stationary distribution, which leads to a contradiction. This proves that the moment condition of the theorem is necessary for the existence of a stationary distribution of the original process $\bX_n$, $n\in\Z_+$.

By definition $\eta_j^*(1)$ has the same distribution as the $j$-th component $V_{m,j}$ of the vector $\bV_m$, and $V_{m,j}$  denotes the total number of those elements in the $m$-th generation which are of type $j$, and also, members of the innovation $\bfeta(m)$ or multigeneration offsprings of $\bfeta(1),\dots,\bfeta(m-1)$. Note that $\eta:=\eta_{j_0}(m-m_0)$ is the number of those members in the innovation $\bfeta(m-m_0)$ which are of type $j_0$. Let $V$ stand for the number of those members in the $m$-th generation of the process which are of type $j$ and $m_0$-th generation offsprings of the population $\eta$. Consider random variables $\xi_1,\xi_2,\dots$ being independent of each other and of $\eta$ and having the same law as the conditional distribution $Y_{m,j}$ with respect to $\{\bX_0=\be_{j_0}\}$. Then, we have $V_{m,j}\geq V$, and by the branching mechanism the variable $V$ has the same distribution as the sum $\sum_{k=1}^\eta \xi_k$. Note that $E\xi_1=M_{j_0,j}^{(m_0)}\in(0,\infty)$ and $E\log(\eta+1)=\infty$ by assumption. From these Proposition \ref{prop:compound} implies that
\[
E\log\big(\eta_j^*(1)+1\big) = E\log\big(V_{m,j}+1\big)
\geq E\log\big(V+1\big) = E\log\bigg(\sum_{k=1}^\eta \xi_k+1\bigg)=\infty\,.
\]
Since the offspring variable $\xi^*_{j,j}(1,1)$ has the same distribution as $\xi^{(m)}_i(1,1)$, it follows that $E\xi^*_{j,j}(1,1)=M^{(m)}_{j,j}>0$. Then, by Remark \ref{remark:no stat disrt} the single-type Galton--Watson process $X^*_{n,j}$, $n\in\Z_+$, can not have any stationary distribution. This argument completes the proof of the theorem.
\end{proof}

\begin{proof}[Proof of Theorem \ref{thm:lin dep}]
First, we show that if either \ref{thm:lin dep 1} or \ref{thm:lin dep 2} is satisfied then $\cC$ is a subset of a lower dimensional affine subspace $\cS$ of $\R^p$. Assume that type $j$ dies out for some $j=1,\dots,p$. Then, by using the notations introduced in the proof of Theorem \ref{thm:irreducibility}, the $j$-th component of $\bV_n$ vanishes with probability 1 for every positive integer $n$. Since $\cC$ is defined as the union of the ranges of the variables $\bV_n$, $n\geq n^*$, the class $\cC$ is a subset of the linear subspace $\cS$ defined by the the equation $\be_j^\top\bv=0$, $\bv\in\R^p$.

Now assume that \ref{thm:lin dep 2} holds and consider an arbitrary $\bx'\in\cC$. Since $\cC$ is a communication class, state $\bx'$ is accessible from some $\bx\in\cC$ in one step. Working on the event $\{\bX_0=\bx\}$ we get the equation
\[
\bc^\top \bX_1
=\sum_{i=1}^p \sum_{k=1}^{x_i} \bc^\top\bxi_i(1,k) + \bc^\top\bfeta(1)
= 0 + \bc^\top\bfeta(1)\,.
\]
Because $P_\bx(\bX_1=\bx')>0$ and $\bc^\top\bfeta(1)$ is degenerate by assumption, the vector $\bx'$ is an element of the affine subspace $\cS$ defined by the equation $\bc^\top\bv=\bc^\top\bfeta(1)$, $\bv\in\R^p$.

For the contrary direction let $\cS\subsetneq\R^p$ denote the affine subspace generated by $\cC$, and assume that none of the types dies out. Consider an arbitrary state $\bx^*\in\cC$, and fix a vector $\by^*\in\Z_+^p$ such that $P(\bfeta(1)=\by^*)>0$. Since the set $\cV=\cS-\bx^*$ is a linear subspace of $\R^p$ with dimension less than $p$, the orthogonal complement $\cV^\perp$ of $\cV$ is a non-trivial linear subspace of $\R^p$, and we have $\bc^\top\bx=\bc^\top\bx^*$ for every $\bc\in\cV^\perp$ and $\bx\in\cS$.

Consider an arbitrary state $\bx\in\cC$ and an arbitrary vector $\bc\in\cV^\perp$, and work on the event $\{\bX_0=\bx\}$. The communication class $\cC$ is closed, which implies that the variable $\bX_1$ lies in $\cS$ with probability 1, and hence, we obtain the equation
\begin{equation}\label{eq:indep terms}
\bc^\top\bx^*=\bc^\top\bX_1
= \sum_{k=1}^{x_1} \bc^\top\bxi_1(1,k) + \cdots + \sum_{k=1}^{x_p} \bc^\top\bxi_p(1,k) + \bc^\top\bfeta(1)\,.
\end{equation}
Consider any type $i=1,\dots,p$. Since type $i$ does not die out by assumption, there exists a state $\bx\in\cC$ such that $x_i\neq 0$. With this state the left side of equation \eqref{eq:indep terms} is deterministic, and the terms on the right side are independent of each other, which imply that $\bc^\top\bxi_i(1,1)$ and $\bc^\top\bfeta(1)$ are degenerate variables. Then, by formula \eqref{eq:S def} we have
\begin{equation}\label{eq:S basic eq}
\bc^\top\bx^* =\bc^\top\bS(\bx)+\bc^\top\bfeta(1)
= E_\bx  \big(\bc^\top \bS(\bx) \big) +\bc^\top\by^*
= (\bM\bc)^\top\bx + \bc^\top\by^*
\end{equation}
with probability 1 for any state $\bx\in\cC$ and vector $\bc\in\cV^\perp$. Since $\cS$ is the affine subspace generated by the set $\cC$, equation \eqref{eq:S basic eq} is valid for any $\bx\in\cS$, as well. 

Consider an arbitrary vector $\bv\in\cV$, and note that both $\bv+\bx^*$ and $\bx^*$ are elements of $\cS$. Then, from equation \eqref{eq:S basic eq} it follows that
\[
(\bM\bc)^\top\bv
= (\bM\bc)^\top (\bv+\bx^*) - (\bM\bc)^\top \bx^*
= \bc^\top(\bx^*-\by^*) - \bc^\top(\bx^*-\by^*) =0\,.
\]
This implies that $\bM\bc\in\cV^\perp$ for any $\bc\in\cV^\perp$, and hence, we have $\psi(\cV^\perp)\subseteq\cV^\perp$ with the linear function $\psi:\R^p\to\R^p$, $\psi(\bc) = \bM\bc$. Because the process $\bX_n$, $n=0,1,\dots$, reaches the class $\cC$ in finitely many steps almost surely in case of any initial state, there exists a state $\bz\in\Z_+^p$, $\bz\not\in\cS$, such that the subspace $\cS$ is accessible from $\bz$ in one step. Since under the event $\{\bX_0=\bz\}$ the variable $\bX_1$ is an element of $\cS$ with positive probability, the equation
\begin{equation}\label{eq:eq for z}
\bc^\top\bx^* = \bc^\top\bX_1 = \bc^\top \bS(\bz)+\bc^\top\bfeta(1)
= E_\bz \big(\bc^\top \bS(\bz)\big) +\bc^\top\by^*
= (\bM\bc)^\top\bz + \bc^\top\by^*
\end{equation}
holds with positive probability in case of any $\bc\in\cV^\perp$. As a consequence, we get that $\bc^\top(\bx^*-\by^*) = (\bM\bc)^\top\bz$. Let us consider the orthogonal decomposition $\bz=\bx+\bx^\perp$ where $\bx\in\cS$ and $\bx^\perp\in \cV^\perp$, $\bx^\perp\neq\bnull$. From \eqref{eq:S basic eq} we obtain that
\[
\bc^\top(\bx^*-\by^*) = (\bM\bc)^\top \bz
= (\bM\bc)^\top \bx + (\bM\bc)^\top \bx^\perp
= \bc^\top(\bx^*-\by^*) + \psi(\bc)^\top\bx^\perp\,,
\]
and hence, $\psi(\bc)^\top\bx^\perp=0$ for any vector $\bc\in\cV^\perp$. That is, $\psi(\cV^\perp)\perp\bx^\perp\in\cV^\perp$ implying that $\psi(\cV^\perp)\subsetneq\cV^\perp$. This means that $\psi$ is not a full rank linear transformation, and there exists a vector $\bc^*\in\cV^\perp$ such that $\bM\bc^*=\psi(\bc^*)=\bnull$. Since the variables $\bc^\top\bxi_i(1,1)$, $i=1,\dots,p$, are deterministic in case of any $\bc\in\cV^\perp$, we get that
\[
(\bc^*)^\top\bxi_i(1,1)=E \big( (\bc^*)^\top\bxi_i(1,1) \big)
= (\bc^*)^\top E \bxi_i(1,1) =(\bM\bc^*)^\top \be_i=0\,,
\qquad
i=1,\dots,p\,,
\]
and the proof is complete.
\end{proof}

\begin{proof}[Proof of Theorem \ref{thm:stationary}]
First, assume that the stationary distribution $\pi$ has finite moment of order $\alpha$, and consider an arbitrary type $i$ that does not die out. Then, there exists a state $\bx\in\cC$ whose $i$-th component is not zero. If the initial distribution of the process is set to the stationary distribution $\pi$, then
\[
\infty> E\|\bX_1\|^\alpha \geq E_\bx \|\bX_1\|^\alpha P(\bX_0=\bx)
\geq E \big\|\bxi_i(1,1)\big\|^\alpha \pi\big(\{\bx\}\big)\,,
\]
proving that $E\|\bxi_i(1,1)\|^\alpha$ is finite. Similarly,
$E\|\bfeta(1)\|^\alpha\leq E\|\bX_1\|^\alpha<\infty$.

For the contrary direction assume that \ref{thm:stationary 2} holds, and consider the constant $\lambda\in(0,1)$ and the vector $\bv\in\Z_+^p$ of Proposition \ref{prop:eigenvector} with $\bA=\bM$. Since the proposition remains true if we multiply $\bv$ with a positive number, we can assume that the components of $\bv$ are larger than 1. Also, introduce the function $V(\bx)=(\bv^\top\bx)^\alpha+1$, $\bx\in\Z_+^p$, and note that $\|\bx\|^\alpha+1\leq V(\bx)$ for every states $\bx$. Our goal is to prove that
\begin{equation}\label{eq:Foster}
E_\bx V(\bX_1) - V(\bx) \leq -c_1 V(\bx) + c_2\1_{\cZ'}(\bx)\,,
\qquad
\bx\in\cZ\,,
\end{equation}
where $c_1>0$ and $c_2<\infty$ are suitable real values, and $\1_{\cZ'}$ is the indicator function of a suitable finite set $\cZ'\subseteq\cZ$. From Theorem \ref{thm:irreducibility} it follows that the process $\bX_n$, $n\in\Z_+$, is $\psi$-\textit{irreducible} and \textit{aperiodic} in the sense of \citet{meyn09} on the reduced state space $\cZ$, and their Proposition 5.5.5 implies that the finite set $\cZ'$ is \textit{petite}. This means that if we can prove the Foster--Lyapunov type criteria \eqref{eq:Foster} for every states $\bx\in\cZ$ then Theorem 15.0.1 of \citet{meyn09} immediately implies statement \ref{thm:stationary 1} and inequality \eqref{eq:meyn} in the current theorem.

If $\alpha\leq 1$ then the function $t\mapsto t^\alpha$ is nonnegative and concave on the positive halfline. This implies that $(s+t)^\alpha\leq s^\alpha + t^\alpha$ for any $s,t\geq 0$. By using Jensen's inequality we get that
\[
\begin{split}
& E_\bx \big(\bv^\top\bX_1\big)^\alpha
\leq E \big( \bv^\top\bS(\bx) \big)^\alpha + E\big( \bv^\top\bfeta(1) \big)^\alpha
\leq \big( \bv^\top E\bS(\bx) \big)^\alpha + E\big(\|\bv\| \|\bfeta(1)\| \big)^\alpha \\
& = \big( \bv^\top \bM^\top\bx \big)^\alpha + \|\bv\|^\alpha E\|\bfeta(1)\|^\alpha
\leq \big( \lambda \bv^\top\bx\big)^\alpha + \|\bv\|^\alpha E\|\bfeta(1)\|^\alpha\,.
\end{split}
\]
Consider an arbitrary constant $c_1\in(0,1-\lambda^\alpha)$ and the finite set
\[
\cZ'=\bigg\{
\bx\in\cZ: V(\bx)< \frac{\|\bv\|^\alpha E\|\bfeta(1)\|^\alpha + 1}{1-\lambda^\alpha-c_1}
\bigg\}\,.
\]
Then, for any $\bx\in\cZ\setminus\cZ'$ we have
\[
E_\bx V(\bX_1)
\leq \lambda^\alpha V(\bx) + \|\bv\|^\alpha E\|\bfeta(1)\|^\alpha +1
\leq (1 -c_1) V(\bx)\,.
\]
This implies inequality \eqref{eq:Foster} with
\[
c_2=\max_{\bx\in\cZ'} \lambda^\alpha V(\bx) + \|\bv\|^\alpha E\|\bfeta(1)\|^\alpha +1
<\infty\,.
\]

In the case $\alpha>1$ let $\|\cdot\|_{L^\alpha}$ stand for the $L^\alpha$-norm of random variables. Fix an arbitrary state $\bx\in\cZ$, and introduce the random vectors $\overline\bX_1:=\bX_1-E_\bx \bX_1$,
\[
\overline\bxi_i(n,k):=\bxi_i(n,k)-E\bxi_i(n,k)\,,
\quad
\overline\bfeta(n):=\bfeta(n)-E\bfeta(n)\,,
\quad
i=1,\dots,p\,,
\
n,k=1,2,\dots
\]
Then, we have
\[
\begin{split}
& \big( E_\bx(\bv^\top \bX_1)^\alpha \big)^{1/\alpha}
= \big\| \bv^\top \bX_1 \big\|_{L^\alpha}
= \big\|\bv^\top\overline\bX_1 + E \big(\bv^\top\bS(\bx)\big)
+ E\big(\bv^\top\bfeta(1)\big)\big\|_{L^\alpha} \\
& \leq \big\|\bv^\top\overline\bX_1\big\|_{L^\alpha}
+ \big\| \bv^\top\bM^\top\bx \big\|_{L^\alpha}
+ \big\| \bv^\top E\bfeta(1) \big\|_{L^\alpha}
\leq \big\|\bv^\top\overline\bX_1\big\|_{L^\alpha} + \lambda \bv^\top\bx + \bv^\top E\bfeta(1).
\end{split}
\]
By using the Marcinkiewicz--Zygmund inequality (see Theorem 13 of \citet{marc37} or Theorem 10.3.2 of \citet{chow97}) we get that
\[
\big\|\bv^\top\overline\bX_1\big\|_{L^\alpha}^\alpha
= E \bigg| \sum_{i=1}^p \sum_{k=1}^{x_i} \bv^\top\overline\bxi_i(1,k) + \bv^\top\overline\bfeta(1) \bigg|^\alpha
 \leq C E \bigg[ \sum_{i=1}^p \sum_{k=1}^{x_i} \big| \bv^\top\overline\bxi_i(1,k) \big|^2
+ \big| \bv^\top\overline\bfeta(1) \big|^2 \bigg]^{\alpha/2},
\]
where $C$ is a suitable positive constant depending only on $\alpha$. Let us note that for arbitrary nonnegative real numbers $a_1,\dots,a_n$ we have
\[
\big(a_1+\cdots+a_n\big)^{\alpha/2}\leq n^{\beta-1} \big(a_1^{\alpha/2}+\cdots+a_n^{\alpha/2}\big),
\]
where the value $\beta$ also depends only on $\alpha$. This inequality follows with $\beta=1$ in the case $\alpha\leq 2$ from the fact that the function $t\mapsto t^{\alpha/2}$ is concave on the positive halfline, and with $\beta=\alpha/2$ in the case $\alpha>2$ from the power mean inequality. Then, we get that
\[
\big\|\bv^\top\overline\bX_1\big\|_{L^\alpha}^\alpha
\leq C \big( \|\bx\| + 1\big)^{\beta-1}
\bigg[ \sum_{i=1}^p \sum_{k=1}^{x_i} E \big| \bv^\top\overline\bxi_i(1,k) \big|^\alpha
+ E \big| \bv^\top\overline\bfeta(1) \big|^\alpha \bigg]
\leq C \big( \|\bx\| + 1\big)^\beta b_\alpha\,,
\]
where
\[
b_\alpha:=\max\Big\{E\big|\bv^\top \overline\bxi_1(1,1)\big|^\alpha, \dots,
E\big|\bv^\top \overline\bxi_p(1,1)\big|^\alpha, E\big|\bv^\top \overline\bfeta(1)\big|^\alpha \Big\}
<\infty\,.
\]
Consider any constant $c_1\in(0,1-\lambda^\alpha)$. Since $(1-c_1)^{1/\alpha}>\lambda$ and $\beta/\alpha<1$, it follows that
\[
\begin{split}
E_\bx V(\bX_1)
& = E_\bx \big(\bv^\top \bX_1\big)^\alpha + 1
\leq \Big[ (Cb_\alpha)^{1/\alpha} \big( \|\bx\| + 1\big)^{\beta/\alpha}
+ \lambda \bv^\top\bx + \bv^\top E\bfeta(1) \Big]^\alpha +1 \\
& \leq \big[ (1-c_1)^{1/\alpha} \bv^\top\bx \big]^\alpha
\leq (1-c_1) V(\bx) \,,
\end{split}
\]
where the second inequality holds for all except finitely many values $\bx\in\cZ$. Let $\cZ'$ stand for the set of those states $\bx\in\cZ$ for which the second inequality does not hold. Then we obtain \eqref{eq:Foster} in the case $\alpha>1$ with
\[
c_2=\max_{\bx\in\cZ'} \Big[ (Cb_\alpha)^{1/\alpha} \big( \|\bx\| + 1\big)^{\beta/\alpha}
+ \lambda \bv^\top\bx + \bv^\top E\bfeta(1) \Big]^\alpha +1 <\infty\,.
\]

If all offspring distributions have finite moment of order $\alpha$, then one can show by similar calculations that inequality \eqref{eq:Foster} holds for any $\bx\in\Z_+^p$ with suitable constants $c_1, c_2$ and with a finite set $\cZ'\subseteq\Z_+^p$. Then, again, Theorem 15.0.1 of \citet{meyn09} implies that \eqref{eq:meyn} holds not only on the set $\cZ$ but for every states $\bx\in\Z_+^p$. This argument completes the proof of our last theorem.
\end{proof}

\section*{Acknowledgement}

This research was supported by project TKP2021-NVA-09. Project no. TKP2021-NVA-09 has been implemented with the support provided by the Ministry of Innovation and Technology of Hungary from the National Research, Development and Innovation Fund, financed under the TKP2021-NVA funding scheme.


\end{document}